\PassOptionsToPackage{backref=page}{hyperref}
\documentclass[11pt,letterpaper,reqno]{amsart}

\usepackage[T1]{fontenc}
\usepackage{amsmath,amssymb,amsfonts,amsthm}
\usepackage{mathtools}
\usepackage{aliascnt}
\usepackage{microtype}
\usepackage{enumitem}
\usepackage{booktabs}
\usepackage{xcolor}
\usepackage{doi}
\usepackage{hyperref}
\usepackage{tikz}
\usetikzlibrary{positioning,arrows.meta}
\usepackage{bookmark}
\usepackage[capitalize,noabbrev]{cleveref}

\renewcommand*{\backref}[1]{}
\renewcommand*{\backrefalt}[4]{%
  \ifcase #1
  \or
    \quad$\hookleftarrow$, cited on page~#2%
  \else
    \quad$\hookleftarrow$, cited on pages~#2%
  \fi
}

\newtheorem{theorem}{Theorem}[section]

\newaliascnt{lemma}{theorem}
\newtheorem{lemma}[lemma]{Lemma}
\aliascntresetthe{lemma}

\newaliascnt{proposition}{theorem}
\newtheorem{proposition}[proposition]{Proposition}
\aliascntresetthe{proposition}

\newaliascnt{corollary}{theorem}
\newtheorem{corollary}[corollary]{Corollary}
\aliascntresetthe{corollary}

\newaliascnt{conjecture}{theorem}
\newtheorem{conjecture}[conjecture]{Conjecture}
\aliascntresetthe{conjecture}

\newaliascnt{problem}{theorem}

\aliascntresetthe{problem}

\newaliascnt{question}{theorem}

\aliascntresetthe{question}

\theoremstyle{definition}
\newaliascnt{definition}{theorem}

\aliascntresetthe{definition}

\newaliascnt{example}{theorem}

\aliascntresetthe{example}

\newaliascnt{remark}{theorem}

\aliascntresetthe{remark}

\crefname{theorem}{Theorem}{Theorems}
\Crefname{theorem}{Theorem}{Theorems}
\crefname{lemma}{Lemma}{Lemmas}
\Crefname{lemma}{Lemma}{Lemmas}
\crefname{proposition}{Proposition}{Propositions}
\Crefname{proposition}{Proposition}{Propositions}
\crefname{corollary}{Corollary}{Corollaries}
\Crefname{corollary}{Corollary}{Corollaries}
\crefname{conjecture}{Conjecture}{Conjectures}
\Crefname{conjecture}{Conjecture}{Conjectures}

\newcommand{\R}{\mathbb{R}}
\newcommand{\dd}{\,\mathrm{d}}

\begin{document}

\title[Higher-index Dirichlet eigenvalue ratios]{Counterexamples to a higher-index Dirichlet eigenvalue-ratio conjecture}

\author[Y.~He]{Yixin He}
\address{School of Mathematical Sciences, Fudan University,
Shanghai 200433, P.~R.~China}
\email{yixin.he717@gmail.com}

\author[Q.~Tang]{Quanyu Tang}
\address{School of Mathematical Sciences, University of Science and Technology
of China, Hefei 230026, P.~R.~China}
\email{tangquanyu827@gmail.com}

\author[H.~Zhang]{Haiqi Zhang}
\address{School of Mathematics, Shandong University, Jinan 250100, P.~R.~China}
\email{ZHQAQ2024@outlook.com}

\begin{abstract}
Let $\lambda_k(\Omega)$ denote the $k$th Dirichlet Laplacian eigenvalue of a bounded planar domain $\Omega$, with eigenvalues counted with multiplicity.  We disprove a conjectured higher-index extension of the Payne--P\'olya--Weinberger inequality that was recorded as an open problem by Ashbaugh. For every integer $m\geq3$, we construct a bounded planar domain $\Omega_m$ with $C^\infty$ boundary such that
$$
\frac{\lambda_{2m}(\Omega_m)}{\lambda_m(\Omega_m)}
>\frac{13}{5}
>\frac{j_{1,1}^2}{j_{0,1}^2},
$$
where $j_{\nu,1}$ is the first positive zero of $J_\nu$, the Bessel function of the first kind of order $\nu$. For $m=3$, the annulus $\{x\in\R^2:1/10<|x|<1\}$ already provides such a counterexample.
\end{abstract}

\subjclass[2020]{Primary 35P15; Secondary 58J50}
% 35P15  	Estimates of eigenvalues in context of PDEs
% 58J50  	Spectral problems; spectral geometry; scattering theory on manifolds

\keywords{Dirichlet eigenvalues, eigenvalue ratios, Payne--P\'olya--Weinberger inequality, spectral geometry, counterexamples}

\maketitle

\section{Introduction}

Throughout the paper, a \emph{domain} is a bounded connected open subset of
Euclidean space. We write $\mathbb N=\{1,2,\ldots\}$ and
$\mathbb N_0=\{0,1,2,\ldots\}$. For any bounded open set
$G\subset\R^N$, not necessarily connected, let $\lambda_j(G)$ denote the
$j$th eigenvalue of the Dirichlet Laplacian, with eigenvalues counted with
multiplicity. In particular, if $G$ is a domain, then
$0<\lambda_1(G)<\lambda_2(G)\leq\lambda_3(G)\leq\cdots$.
If $J_\nu$ denotes the Bessel function of the first kind of order $\nu$
and $j_{\nu,q}$ its $q$th positive zero, define
\[
 \gamma_N:=\frac{j_{N/2,1}^2}{j_{N/2-1,1}^2}.
\]
Payne, P\'olya, and Weinberger initiated the systematic study of universal
ratios of Dirichlet eigenvalues.  In the planar case they conjectured that
the ratio $\lambda_2/\lambda_1$ is maximized by the disk, and their 1956
paper established classical non-sharp estimates that motivated the problem
\cite{PPW1956}.  The corresponding sharp inequality in dimension $N$ is
\begin{equation}\label{eq:ppw}
 \frac{\lambda_2(\Omega)}{\lambda_1(\Omega)}\leq\gamma_N.
\end{equation}
Ashbaugh and Benguria proved \eqref{eq:ppw} in all dimensions
\cite{AshbaughBenguria1992}; the constant is sharp and is attained by balls.

The higher-index question considered here grew out of the same circle of
ideas.  Ashbaugh and Benguria proved, in particular, that
$\lambda_4(\Omega)/\lambda_2(\Omega)<\gamma_N$ for connected domains, as a
special case of a more general estimate involving the number of nodal domains
of an eigenfunction \cite{AshbaughBenguria1993}.  They also obtained the
iterated bound
$\lambda_{2^r}(\Omega)/\lambda_1(\Omega)\leq\gamma_N^r$
\cite{AshbaughBenguria1994}.  Ashbaugh subsequently included the corresponding
one-step inequality in his list of open problems
\cite[Problem~3, p.~15]{Ashbaugh1999}.  The same problem was recorded in the
2007 and 2009 Oberwolfach reports.  Those reports noted that the cases
$m=1,2$ were known and that the cases $m\geq3$ remained open
\cite[p.~1021]{BucurButtazzoHenrot2007}; see also
\cite[pp.~400--401]{AshbaughBenguriaLaugesenWeidl2009}.
The conjecture can be stated as follows.

\begin{conjecture}\label{conj:ashbaugh}
For every integer $m\geq1$, every $N\geq2$, and every domain
$\Omega\subset\R^N$,
\begin{equation}\label{eq:ashbaugh}
 \frac{\lambda_{2m}(\Omega)}{\lambda_m(\Omega)}\leq\gamma_N.
\end{equation}
\end{conjecture}

Our main result disproves the conjecture for every $m\geq3$, already in the
plane.

\begin{theorem}\label{thm:main}
For every integer $m\geq3$, there exists a planar domain $\Omega_m$ with
$C^\infty$ boundary such that
\[
 \frac{\lambda_{2m}(\Omega_m)}{\lambda_m(\Omega_m)}>\frac{13}{5}.
\]
For $m=3$, one may take the circular annulus $A:=\bigl\{x\in\R^2:1/10<|x|<1\bigr\}$.
\end{theorem}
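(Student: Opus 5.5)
The plan is to handle $m=3$ by an explicit, rigorously verified computation on the annulus $A$, and then to reach all $m\geq3$ by gluing finitely many such "building blocks" with thin channels.

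\textbf{The annulus $A$.} Separation of variables gives the Dirichlet spectrum of $A$ as the squares $k^2$ of the positive zeros of the cross products
\[
 f_n(k)=J_n(k)Y_n(k/10)-J_n(k/10)Y_n(k),\qquad n\in\mathbb N_0.
\]
Each such zero has multiplicity $1$ for $n=0$ and $2$ for $n\geq1$.

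Compared with the unit disk, the small hole pushes the radial ($n=0$) modes up substantially, while the modes with $n\geq1$ barely move. Numerically I expect the following ordering:
\begin{itemize}
\item $\lambda_1$ is the first $n=0$ root, at roughly $9$--$10$;
\item $\lambda_2=\lambda_3$ is the first $n=1$ root, slightly above $j_{1,1}^2\approx14.68$;
\item $\lambda_4=\lambda_5$ is the first $n=2$ root, near $26.4$;
\item $\lambda_6$ is the minimum of the second $n=0$ root and the first $n=3$ root, both around $40$.
\end{itemize}
So it suffices to prove two bounds. First, an upper bound $\lambda_3(A)<L$, obtained by exhibiting a sign change of $f_1$ on a short interval. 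Second, a lower bound $\lambda_6(A)>\tfrac{13}{5}L$, which requires showing that the first five eigenvalues are exhausted by the listed roots. Concretely, I must check that $f_0$ has exactly one zero, $f_1$ and $f_2$ exactly one each, and $f_n$ for $n\geq3$ none, in $(0,\sqrt{13L/5}\,]$.

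For $n$ large this follows from monotonicity: the first root of $f_n$ is at least $j_{n,1}$, by domain monotonicity with the disk. That disposes of all $n\geq3$, since $j_{3,1}^2\approx40.7$, provided the margin holds. The finitely many remaining $n$ are handled by interval arithmetic on $f_n$ together with a zero-counting argument. For the counting I would use Sturm comparison on the radial ODE, or monotonicity of the $n$-th eigenvalue branch in the inner radius between the disk and the $n$-dependent limits.

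\textbf{General $m$.} I would use disjoint unions and then connect them. If $G$ is a bounded open set with $\lambda_m(G)\le L$ and $\lambda_{2m}(G)>\tfrac{13}{5}L$ with strict margins, the same holds for any small perturbation of $G$. The disjoint union of $a$ copies of $A$ satisfies this with $m=3a$, because its spectrum is the spectrum of $A$ with every multiplicity multiplied by $a$. To cover the other residues mod $3$, I would search, using the same Bessel machinery, for annuli $A_4,A_5$ (varying the inner radius, and scaling so that the thresholds match those of $A$) with counts $\#\{\lambda\le L\}\ge4,5$ and $\#\{\lambda<\tfrac{13}{5}L\}\le 7,9$ respectively. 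Every $m\geq3$ is then of the form $3a+4b+5c$, and the union of the corresponding blocks, all scaled to share the same $L$, works. An alternative is to find directly a one-parameter annulus family realizing each $m$, but the block approach needs only finitely many certified computations.

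\textbf{Connecting the blocks.} Finally I join the components by thin straight channels of width $\varepsilon$ and smooth the corners, obtaining a $C^\infty$ domain $\Omega_m$. Dirichlet eigenvalues are continuous under such perturbations as $\varepsilon\to0$; this follows from the Rauch--Taylor/Gadyl'shin theory, or more elementarily from min--max. The upper bound uses test functions supported in the original pieces. The lower bound follows because the channels have vanishing capacity in the limit, so the eigenvalues of $\Omega_m$ converge to those of the disjoint union. Hence the strict inequality survives for small $\varepsilon$.

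\textbf{Main obstacle.} I expect the hardest step to be the rigorous lower bound on $\lambda_6(A)$, which requires certified zero counting of Bessel cross products, since the margin ($\approx40/15$ versus $2.6$) is thin. I would first try tight interval evaluations of $J_n,Y_n$ combined with the monotonicity-in-$n$ and domain-monotonicity arguments to limit the work to $n\le2$.
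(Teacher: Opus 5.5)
The genuine gap is in the step from $m=3a$ to the other residues. Your conditions on $A_4$ are $\lambda_4(A_4)\le L$ and $\lambda_8(A_4)\ge\tfrac{13}{5}L$, which say exactly that $A_4$ is itself a counterexample for $m=4$. Since $m=4$ (and $m=7=3+4$) cannot be written as $3a+4b+5c$ without $b\ge1$, your reduction is circular there. Moreover, no annulus can serve as $A_4$. Write $\mu_{n,1}$ for the first eigenvalue in the $n$th angular sector.
\begin{itemize}
\item For every inner radius, $\lambda_4=\mu_{2,1}$.
\item The eigenvalues $\mu_{0,1},\mu_{1,1},\dots,\mu_{4,1}$, with their multiplicities, already give nine eigenvalues $\le\mu_{4,1}$, so $\lambda_8\le\mu_{4,1}$.
\item The ratio $\mu_{4,1}/\mu_{2,1}$ is largest in the disk limit, where it equals $j_{4,1}^2/j_{2,1}^2\approx2.18<\tfrac{13}{5}$. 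Enlarging the hole only pushes the angular modes closer together.
\end{itemize}

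The flaw is in the bookkeeping. You require every block to have excess $2\#\{\lambda\le L\}-\#\{\lambda\le\tfrac{13}{5}L\}\ge1$. Only one block needs positive excess; the others may have excess $0$. The missing ingredient is such a neutral block: a domain $P$ with $\lambda_1(P)<\alpha<\lambda_2(P)<\beta<\lambda_3(P)$, where $\alpha=\lambda_3(A)$ and $\beta=\lambda_6(A)$. Each copy adds one eigenvalue below $\alpha$ and two below $\beta$. Hence $A\mathbin{\dot\cup}P_1\mathbin{\dot\cup}\cdots\mathbin{\dot\cup}P_{m-3}$ has $\lambda_m=\alpha$ and $\lambda_{2m}=\beta$ for every $m\ge4$, using only the $m=3$ certificates. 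A disk or annulus cannot serve, because its $\lambda_2$ is double. The paper's rectangle $(0,\pi/2)\times(0,\pi/\sqrt{10})$, with eigenvalues $14,26,44$, does.

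The rest of your plan is viable. For $m=3$ you would certify $\lambda_3(A)<L<\tfrac{5}{13}\lambda_6(A)$ by interval arithmetic on the Bessel cross products. The paper instead avoids $Y_n$ entirely, using trial functions, a localized Hardy inequality for $\mu_{0,2}>41$, and Euler--Rayleigh bounds for $j_{3,1}^2>1014/25$. Expect a narrow window for $L$, since $\mu_{1,1}\approx15.5$ while $\tfrac{5}{13}j_{3,1}^2\approx15.66$.

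Your gluing step is fine, but the lower bound is not a vanishing-capacity statement: a thin strip's capacity tends to that of a segment, which is positive. It rests instead on the added set having vanishing measure and touching $\partial G$ only near finitely many points, as in Daners' Theorem~7.5.
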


Lemma~\ref{lem:bessel-certificates} gives $\gamma_2<13/5$.
Consequently, Theorem~\ref{thm:main} gives strict counterexamples to
\eqref{eq:ashbaugh}.  It also disproves, for $N=2$ and $k=2$, the natural
more general estimate
\[
 \frac{\lambda_{km}(\Omega)}{\lambda_m(\Omega)}\leq C_{N,k},
 \qquad
 C_{N,k}:=\sup_D\frac{\lambda_k(D)}{\lambda_1(D)},
\]
where the supremum is taken over domains $D\subset\R^N$.  This broader
formulation was also discussed in Ashbaugh's open-problem survey
\cite[p.~15]{Ashbaugh1999}.  Recent non-sharp estimates for ratios of arbitrary
Dirichlet eigenvalues on convex domains may be found in \cite{Funano2026};
our examples are nonconvex and do not address possible restrictions of
Conjecture~\ref{conj:ashbaugh} to narrower geometric classes.

The proof is explicit. Separation of variables reduces the annular spectrum
to one-dimensional weighted problems. Polynomial trial functions and a
localized Hardy estimate identify the first seven annular eigenvalues and
show that $\lambda_6(A)/\lambda_3(A)>13/5$. All numerical comparisons are
certified by exact arithmetic and rational bounds, including Euler--Rayleigh
bounds for Bessel zeros, so no floating-point computation enters the proof.
To treat every $m\geq4$, we adjoin $m-3$ rectangles whose first three
eigenvalues interlace with the relevant annular eigenvalues in the required
way. Exact spectral counting on the disconnected union gives the same ratio,
and standard spectral convergence under shrinking connections then produces
smooth connected examples.

Section~\ref{sec:bessel} develops the Bessel-zero certificates.
Section~\ref{sec:annulus} analyzes the annulus and proves the case $m=3$.
Section~\ref{sec:all-m} amplifies the construction to every $m\geq4$.
Section~\ref{sec:nodal} explains why the usual nodal-domain argument stops
short of the conjectured index.  Exact trial-function integrals and the
remaining elementary rational estimates are collected in the appendix.

\section{Certified bounds for Bessel zeros}\label{sec:bessel}

We use the following elementary form of the Euler--Rayleigh inequalities;
see, for example, \cite[Lemma~3.2]{IsmailMuldoon1995}. The product formula for \(J_\nu\) and the Rayleigh sums of its zeros are standard; see \cite[\S10.21(iii),(xiii)]{DLMF}.

\begin{lemma}\label{lem:euler-rayleigh}
Let $x_1>x_2>\cdots>0$ and suppose that
$S_p:=\sum_{q\geq1}x_q^p<\infty$.  Then, for every integer $p\geq1$,
\[
 S_p^{-1/p}<\frac1{x_1}<\frac{S_p}{S_{p+1}}.
\]
\end{lemma}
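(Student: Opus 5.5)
We prove the two inequalities separately. Both use only that $x_1$ is the largest term and that all terms are positive. Finiteness of $S_p$ for the given $p$ implies finiteness of $S_{p+1}$. Indeed $x_q\le x_1$ gives $x_q^{p+1}\le x_1 x_q^p$, so $S_{p+1}\le x_1S_p<\infty$, and the ratio $S_p/S_{p+1}$ makes sense.

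\emph{Left inequality.} Since $x_2>0$, the sum $S_p$ contains at least one positive term besides $x_1^p$; indeed the sequence is infinite with all terms positive. Hence $S_p>x_1^p$ strictly. Raising to the power $-1/p$ reverses the inequality, giving $S_p^{-1/p}<x_1^{-1}$.

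\emph{Right inequality.} We compare $S_{p+1}$ with $x_1S_p$ term by term:
\[
 x_1S_p-S_{p+1}=\sum_{q\ge1}x_q^p(x_1-x_q)\ge x_2^p(x_1-x_2)>0,
\]
using that every summand is nonnegative and the $q=2$ summand is strictly positive because $x_1>x_2>0$. Therefore $S_{p+1}<x_1S_p$. Dividing by the positive quantity $x_1S_{p+1}$ gives $1/x_1<S_p/S_{p+1}$.

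There is no real obstacle here. The only points needing care are strictness, which comes from the existence of $x_2$ with $0<x_2<x_1$, and checking that $S_{p+1}$ is finite and positive so the quotient is legitimate. In later applications one takes $x_q=j_{\nu,q}^{-2}$, and the Rayleigh sums $\sum_q j_{\nu,q}^{-2p}$ are rational functions of $\nu$ (from the product formula). This yields rational two-sided bounds on $j_{\nu,1}^2$: $S_p/S_{p+1}$ is an upper bound and $S_p^{-1/p}$ is a lower bound.
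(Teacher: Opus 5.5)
Your proof is correct and follows the paper's argument: the left inequality comes from $S_p>x_1^p$, and the right one from $S_{p+1}<x_1S_p$. Your explicit checks of strictness (via $0<x_2<x_1$) and of the finiteness and positivity of $S_{p+1}$ fill in details the paper leaves implicit; the only quibble is your closing remark, since $S_p^{-1/p}$ is generally not rational, which is why the paper certifies such lower bounds through integer comparisons like $3072>14^3$.
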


\begin{proof}
The first inequality follows from $S_p>x_1^p$.  Since $x_q<x_1$ for
$q\geq2$, one also has $S_{p+1}<x_1S_p$, which gives the second inequality.
\end{proof}

For $\nu\geq0$ and $p\geq1$, define the \emph{Rayleigh sum}
$\sigma_p(\nu):=\sum_{q=1}^\infty j_{\nu,q}^{-2p}$.  Applying
Lemma~\ref{lem:euler-rayleigh} to $x_q=j_{\nu,q}^{-2}$ gives
\begin{equation}\label{eq:euler-rayleigh-bessel}
 \sigma_p(\nu)^{-1/p}<j_{\nu,1}^2
 <\frac{\sigma_p(\nu)}{\sigma_{p+1}(\nu)}.
\end{equation}
The Rayleigh sums satisfy the classical convolution recurrence of
Kishore~\cite{Kishore1963}; see also the Riccati-equation derivation and
generalizations of Gupta and Muldoon~\cite{GuptaMuldoon2000}:
\begin{equation}\label{eq:rayleigh-recursion}
 \sigma_1(\nu)=\frac{1}{4(\nu+1)},
 \qquad
 \sigma_p(\nu)=\frac{1}{\nu+p}
 \sum_{r=1}^{p-1}\sigma_r(\nu)\sigma_{p-r}(\nu)
 \quad(p\geq2).
\end{equation}
The values needed below are listed in Table~\ref{tab:rayleigh-sums}; each
follows by repeated substitution in \eqref{eq:rayleigh-recursion}.
These computations may also be verified by the accompanying exact-arithmetic
script~\cite{ExactArithmeticCode}.

\begin{table}[ht]
\centering
\caption{Rayleigh sums used in the proof.}\label{tab:rayleigh-sums}
\begin{tabular}{ccl}
\toprule
$\nu$ & $p$ & $\sigma_p(\nu)$ \\
\midrule
$0$ & $2$ & $1/32$ \\
$1$ & $3$ & $1/3072$ \\
$1$ & $5$ & $13/8847360$ \\
$1$ & $6$ & $11/110100480$ \\
$3$ & $5$ & $1/110100480$ \\
\bottomrule
\end{tabular}
\end{table}

\begin{lemma}\label{lem:bessel-certificates}
The first Bessel zeros satisfy
\[
 j_{1,1}^2>14,
 \qquad
 j_{3,1}^2>\frac{1014}{25},
 \qquad
 \gamma_2=\frac{j_{1,1}^2}{j_{0,1}^2}<\frac{13}{5}.
\]
\end{lemma}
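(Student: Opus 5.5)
We will apply the Euler--Rayleigh bounds \eqref{eq:euler-rayleigh-bessel} directly, using only the Rayleigh sums in Table~\ref{tab:rayleigh-sums}. Lower bounds on $j_{\nu,1}^2$ come from the left inequality $j_{\nu,1}^2>\sigma_p(\nu)^{-1/p}$. Upper bounds come from the right inequality $j_{\nu,1}^2<\sigma_p(\nu)/\sigma_{p+1}(\nu)$. Each claim then reduces to a comparison of explicit rationals, or of a rational with a $p$th root, which we settle by raising to integer powers.

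\emph{First bound.} Take $\nu=1$, $p=5$. Then
\[
j_{1,1}^2>\sigma_5(1)^{-1/5}=(8847360/13)^{1/5},
\]
so it suffices to check $14^5\cdot 13<8847360$. We have $14^5=537824$, and $537824\cdot 13=6991712<8847360$. (The choice $p=3$ gives only $3072^{1/3}\approx 14.5$; this also exceeds $14$, since $14^3=2744<3072$, and is the simpler certificate.)

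\emph{Second bound.} Take $\nu=3$, $p=5$. Then
\[
j_{3,1}^2>(110100480)^{1/5},
\]
and we need $(1014/25)^5<110100480$, i.e.\ $1014^5<110100480\cdot 25^5$. Here $1014^5\approx 1.0722\times10^{15}$ and $110100480\cdot 9765625\approx 1.0752\times10^{15}$. This is tight but true. Exact integer multiplication confirms it, and we would record this check in the appendix.

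\emph{Ratio bound.} We need an upper bound on $j_{1,1}^2$ and a lower bound on $j_{0,1}^2$. The upper bound uses $\nu=1$, $p=5$:
\[
j_{1,1}^2<\frac{\sigma_5(1)}{\sigma_6(1)}=\frac{13}{8847360}\cdot\frac{110100480}{11}=\frac{13\cdot 12.444\ldots}{11}.
\]
Since $110100480/8847360=112/9$, this equals $\frac{1456}{99}\approx 14.707$. The lower bound uses $\nu=0$, $p=2$: $j_{0,1}^2>\sigma_2(0)^{-1/2}=\sqrt{32}$. Hence
\[
\gamma_2<\frac{1456}{99\sqrt{32}},
\]
and $\gamma_2<13/5$ reduces to $5\cdot1456<13\cdot 99\sqrt{32}$, i.e.\ $7280<1287\sqrt{32}$. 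Squaring gives $52998400<1656369\cdot 32=53003808$, which holds.

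\emph{Main obstacle.} The delicate steps are the two tight comparisons, for $j_{3,1}^2$ and for the ratio. Their margins are tiny (relative slack of order $10^{-3}$ and $10^{-4}$), so they must be checked in exact integer arithmetic rather than estimated. If the $\sqrt{32}$ bound had failed, the fallback would be a higher Rayleigh sum for $\nu=0$ computed from \eqref{eq:rayleigh-recursion}.
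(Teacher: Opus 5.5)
Your proposal is correct and follows the paper's proof essentially step for step. You use the same Euler--Rayleigh certificates from \eqref{eq:euler-rayleigh-bessel}: $\sigma_5(3)^{-1/5}$ for $j_{3,1}^2$, and $j_{1,1}^2<1456/99$ with $j_{0,1}^2>\sqrt{32}$ for the ratio, settled by the same squared integer comparison with margin $5408$. For $j_{1,1}^2>14$ the paper uses the $p=3$ bound $3072^{1/3}$, which you also note, though your $p=5$ variant works equally well. One small correction: $1014^5\approx1.07199\times10^{15}$, not $1.0722\times10^{15}$, and the exact check you defer to is $110100480\cdot25^5-1014^5=3212367382176>0$, which the paper records explicitly.
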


\begin{proof}
The lower bound in \eqref{eq:euler-rayleigh-bessel} gives
$j_{1,1}^2>3072^{1/3}>14$, since $3072>14^3$.  It also gives
\[
 j_{3,1}^2>110100480^{1/5}>\frac{1014}{25},
\]
because
$110100480\cdot25^5-1014^5=3212367382176>0$.

For the ratio, the lower and upper bounds in
\eqref{eq:euler-rayleigh-bessel} give
\[
 j_{0,1}^2>\sqrt{32},
 \qquad
 j_{1,1}^2<\frac{\sigma_5(1)}{\sigma_6(1)}=\frac{1456}{99}.
\]
Hence
\[
 \gamma_2=\frac{j_{1,1}^2}{j_{0,1}^2}
 <\frac{1456}{99\sqrt{32}}<\frac{13}{5},
\]
where the last inequality follows after squaring from
$13^2\cdot99^2\cdot32-5^2\cdot1456^2=5408>0$.
\end{proof}

\section{An annular counterexample}\label{sec:annulus}

Set $a:=1/10$ and $A:=\{x\in\R^2:a<|x|<1\}$.  For
$n\in\mathbb{N}_0$ and $f\in H_0^1(a,1)\setminus\{0\}$, define
\begin{equation}\label{eq:radial-quotient}
 \mathcal{Q}_n[f]:=
 \frac{\displaystyle\int_a^1
 \left(r|f'(r)|^2+\frac{n^2}{r}|f(r)|^2\right)\dd r}
 {\displaystyle\int_a^1 r|f(r)|^2\dd r}.
\end{equation}
This is precisely the Rayleigh quotient of the Dirichlet Laplacian on
functions of the form $u(r,\theta)=f(r)e^{in\theta}$; see, for example,
\cite[(2.6) and Appendix~A]{AnoopBobkovDrabek2022}.
Let $\mu_{n,q}$ be the $q$th variational eigenvalue associated with
\eqref{eq:radial-quotient}.  Separation of variables then shows that
the Dirichlet spectrum of $A$ is the multiset consisting of $\mu_{0,q}$ with
multiplicity one and $\mu_{n,q}$ with multiplicity two for every $n\geq1$.

\subsection{Angular monotonicity and comparison with the disk}

\begin{lemma}\label{lem:angular-monotonicity}
For every $n\geq0$ and $q\geq1$,
\begin{equation}\label{eq:angular-gap}
 \mu_{n+1,q}\geq\mu_{n,q}+2n+1.
\end{equation}
\end{lemma}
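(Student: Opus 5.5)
The plan is to compare the two radial quotients pointwise and then pass to variational eigenvalues by the min-max principle. For $f\in H_0^1(a,1)\setminus\{0\}$ we have
\[
 \mathcal Q_{n+1}[f]-\mathcal Q_n[f]
 =\frac{(2n+1)\int_a^1 r^{-1}|f|^2\dd r}{\int_a^1 r|f|^2\dd r}.
\]
Since $a<r<1$, we have $r^{-1}>r$ on $(a,1)$. Hence the right-hand side is strictly greater than $2n+1$. So the pointwise inequality $\mathcal Q_{n+1}[f]\geq\mathcal Q_n[f]+2n+1$ holds for every admissible $f$.

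Next, recall that $\mu_{n,q}$ is given by the Courant--Fischer formula
\[
 \mu_{n,q}=\min_{\substack{V\subset H_0^1(a,1)\\ \dim V=q}}\ \max_{f\in V\setminus\{0\}}\mathcal Q_n[f].
\]
Both quotients live on the same form domain $H_0^1(a,1)$ with the same denominator, so the same subspaces $V$ are admissible in both. Fix an optimal $q$-dimensional $V$ for $\mu_{n+1,q}$. Then
\[
 \mu_{n+1,q}=\max_V\mathcal Q_{n+1}\geq\max_V\bigl(\mathcal Q_n+2n+1\bigr)\geq\mu_{n,q}+2n+1,
\]
which is \eqref{eq:angular-gap}.

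The only point needing care is that the minimum in the min-max formula is attained, or equivalently that one can argue with an infimum instead. This holds because the weights $r$ and $r^{-1}$ are bounded above and below on $[a,1]$ with $a=1/10>0$. The form is therefore equivalent to the standard $H_0^1$ form, and the operator has compact resolvent and discrete spectrum. If one prefers to avoid attainment, taking infima over $V$ of the inequality between the maxima gives the same conclusion. I expect no real obstacle beyond stating this regularity of the weighted Sturm--Liouville problem.
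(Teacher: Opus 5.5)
Your proof is correct and follows essentially the same argument as the paper. The paper also derives the pointwise identity $\mathcal{Q}_{n+1}[f]-\mathcal{Q}_n[f]=(2n+1)\int_a^1 r^{-1}|f|^2\dd r\big/\int_a^1 r|f|^2\dd r\geq 2n+1$ from $r^{-1}\geq r$ on $(a,1)$, and then applies min--max over $q$-dimensional subspaces of the common form domain $H_0^1(a,1)$; your extra remarks on attainment and strictness are harmless but not needed.
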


\begin{proof}
Since $a<r<1$, one has $r^{-1}\geq r$. Hence, for every
$f\in H_0^1(a,1)\setminus\{0\}$,
\[
 \mathcal{Q}_{n+1}[f]-\mathcal{Q}_n[f]
 =(2n+1)\frac{\int_a^1|f|^2r^{-1}\dd r}
 {\int_a^1|f|^2r\dd r}
 \geq2n+1.
\]
Taking the min--max over \(q\)-dimensional subspaces of \(H_0^1(a,1)\) gives \eqref{eq:angular-gap}.
\end{proof}

\begin{lemma}\label{lem:disk-sector}
For every $n\geq0$, one has $\mu_{n,1}\geq j_{n,1}^2$.
\end{lemma}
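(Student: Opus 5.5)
The idea is to compare the annulus with the unit disk $D$ by domain monotonicity, restricted to the $n$th angular mode. Take a minimizer $f$ of $\mathcal{Q}_n$ on $H_0^1(a,1)$; any nonzero $f$ would do, since we only need $\mathcal{Q}_n[f]\geq j_{n,1}^2$. Extend $f$ by zero to $[0,1]$. The extension $\tilde f$ lies in $H_0^1(0,1)$ and vanishes near $r=0$. The function $u(r,\theta)=\tilde f(r)e^{in\theta}$ then belongs to $H_0^1(D)$, being supported in the closed annulus, and its Dirichlet Rayleigh quotient on $D$ equals $\mathcal{Q}_n[f]$, by the same computation that identifies \eqref{eq:radial-quotient} with the Laplacian quotient on the annulus. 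Because the integrands vanish on $(0,a)$, the integrals over $(0,1)$ agree with those over $(a,1)$.

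The key step is to show that for the disk, the infimum of the Rayleigh quotient over functions of the form $g(r)e^{in\theta}$ is exactly $j_{n,1}^2$. I will argue this through orthogonality of angular modes. The Dirichlet eigenfunctions of $D$ are $J_k(j_{k,q}r)e^{\pm ik\theta}$, and any $u\in H_0^1(D)$ of pure angular frequency $n$ expands only in the eigenfunctions $J_n(j_{n,q}r)e^{in\theta}$, $q\geq1$. For $n=0$ the single mode $J_0(j_{0,q}r)$ is used. Expanding $u$ in this orthogonal family, which is orthogonal both in $L^2$ and in the energy form, gives
\[
\int_D|\nabla u|^2\geq j_{n,1}^2\int_D|u|^2 .
\]
Hence $\mathcal{Q}_n[f]\geq j_{n,1}^2$ for every admissible $f$, and so $\mu_{n,1}\geq j_{n,1}^2$.

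The only point requiring care is completeness: the family $\{J_n(j_{n,q}r)\}_q$ must be complete in the weighted space $L^2(r\,dr)$ restricted to this angular mode. This follows from the completeness of the Dirichlet eigenbasis of $D$ together with the decomposition of $L^2(D)$ into Fourier modes in $\theta$.

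Alternatively, I can avoid the spectral theory of $D$ and argue one-dimensionally, which is the route I would try first to keep the paper self-contained. Put $\phi(r)=J_n(j_{n,1}r)$, which is positive on $(0,1)$ and satisfies
\[
(r\phi')'-\frac{n^2}{r}\phi=-j_{n,1}^2r\phi .
\]
For $f\in H_0^1(a,1)$ write $f=\phi w$; this is legitimate because $\phi$ is bounded below by a positive constant on $[a,1-\epsilon]$. A Picone-type identity, obtained by integrating by parts with boundary terms vanishing since $f(a)=f(1)=0$, gives
\[
\int_a^1\Bigl(rf'^2+\frac{n^2}{r}f^2\Bigr)-j_{n,1}^2\int_a^1rf^2=\int_a^1r\phi^2w'^2\geq0 .
\]
The delicate endpoint is $r=1$, where $\phi$ vanishes. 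To handle it, I would first apply the identity to $f$ with compact support in $(a,1)$ and then pass to general $f$ by density.
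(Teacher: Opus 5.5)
Your proposal is correct. Your first argument is the paper's proof: extending $f$ by zero to $(0,1)$ puts $\tilde f(r)e^{in\theta}$ in the $n$th angular sector of $H_0^1$ of the unit disk with the same quotient, and min--max there gives the bound. The only difference is that the paper takes as known that the bottom of the disk's $n$th sector is $j_{n,1}^2$. You instead derive this from the Fourier--Bessel eigenbasis of the disk and the $L^2$ and energy orthogonality of the angular modes.

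Your one-dimensional alternative is a genuinely different route. The ground-state substitution $f=\phi w$ with $\phi(r)=J_n(j_{n,1}r)$ stays on $(a,1)$. It uses only the Bessel equation and the positivity of $J_n$ on $(0,j_{n,1})$, and needs neither the disk nor any completeness statement. The cost is that $\phi$ vanishes at $r=1$, and you handle this correctly. For $f\in C_c^\infty(a,1)$ the boundary terms vanish, and the identity gives $\int_a^1(rf'^2+n^2f^2/r)\dd r\geq j_{n,1}^2\int_a^1 rf^2\dd r$. Both quadratic forms are continuous on $H_0^1(a,1)$ because $1/r\leq 1/a$ there, so density extends the inequality to all of $H_0^1(a,1)$.

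The paper's version is shorter, given the standard separated spectrum of the disk; yours is self-contained. Applied to the smooth first eigenfunction, yours even gives the strict inequality $\mu_{n,1}>j_{n,1}^2$. There $f/\phi$ stays bounded at $r=1$ because $J_n$ has a simple zero at $j_{n,1}$, so equality would force $f$ to be a constant multiple of $\phi$. That is impossible, since $f(a)=0$ while $\phi(a)>0$.
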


\begin{proof}
Extend \(f\in H_0^1(a,1)\) by zero from \((a,1)\) to \((0,1)\).
This embeds the form domain of the annular $n$th angular sector into the
corresponding form domain for the unit disk.  The min--max principle gives
$\mu_{n,1}\geq j_{n,1}^2$.
\end{proof}

\subsection{Separation from the second radial mode}

The only step in identifying the low-lying spectrum that does not follow
immediately from monotonicity is to show that the first $n=3$ mode lies
below the second radial mode.

\begin{proposition}\label{prop:sector-separation}
One has $\mu_{3,1}<\mu_{0,2}$.
\end{proposition}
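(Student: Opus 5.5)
The plan is to squeeze a rational number between the two eigenvalues. We will find an explicit upper bound for $\mu_{3,1}$ from a polynomial trial function in $\mathcal{Q}_3$, and an explicit lower bound for $\mu_{0,2}$ from a Liouville-type substitution followed by a one-dimensional Hardy/Poincar\'e estimate. Heuristically $\mu_{3,1}$ is close to $j_{3,1}^2\approx 40.8$, because the hole of radius $a=1/10$ barely affects a mode vanishing like $r^3$. By contrast $\mu_{0,2}$ is roughly $(2\pi/(1-a))^2\approx 48.7$. There is therefore a comfortable gap, and a separating constant such as $45$ should work.

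\emph{Lower bound for $\mu_{0,2}$.} For $f\in H_0^1(a,1)$ write $f=r^{-1/2}g$ with $g\in H_0^1(a,1)$. A direct computation, integrating the cross term $-\int_a^1 (g^2)'/(2r)\dd r$ by parts (boundary terms vanish since $g(a)=g(1)=0$), gives
\[
 \int_a^1 r|f'|^2\dd r=\int_a^1\Bigl(|g'|^2+\frac{|g|^2}{4r^2}\Bigr)\dd r,
 \qquad
 \int_a^1 r|f|^2\dd r=\int_a^1|g|^2\dd r .
\]
Thus $\mathcal{Q}_0[f]\geq \int|g'|^2/\int|g|^2$. The map $f\mapsto g$ is a linear bijection of $H_0^1(a,1)$, so the min--max principle yields
\[
 \mu_{0,2}\geq\frac{4\pi^2}{(1-a)^2}=\frac{400\pi^2}{81},
\]
the second Dirichlet eigenvalue of $-\mathrm{d}^2/\mathrm{d}r^2$ on an interval of length $9/10$. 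With the rational bound $\pi^2>9.86$ this gives $\mu_{0,2}>48$. (One could even keep the extra $1/(4r^2)\geq 1/4$, but this is unnecessary.)

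\emph{Upper bound for $\mu_{3,1}$.} Take the trial function $f(r)=r^3(1-r)(r-a)$, or if necessary $f(r)=(r^3+c r^5)(1-r)(r-a)$ with a rational $c$ mimicking the Taylor shape of $J_3(j_{3,1}r)$. Compute $\mathcal{Q}_3[f]$ exactly: numerator and denominator are integrals of polynomials, or of polynomials times $r^{-1}$ with no actual negative powers, since $f^2/r$ is a polynomial. Evaluate them in exact rational arithmetic at $a=1/10$, with the integrals recorded in the appendix, and check that the resulting rational is below $48$, ideally near $42$. Since $\mu_{3,1}\leq\mathcal{Q}_3[f]$, combining the two bounds gives $\mu_{3,1}<48<\mu_{0,2}$.

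The main risk is the trial-function step. The crude choice $r^3(1-r)(r-a)$ may overshoot, because the true eigenfunction is not well approximated by a single product of low degree. If its quotient is not below the Hardy-type lower bound, I would first add the one-parameter correction $c r^5$ (or $c r^4$) and optimise $c$ over simple rationals. This is effectively a $2\times 2$ Rayleigh--Ritz computation, which should bring the bound well below $48$. The lower bound side has ample slack and requires no further refinement.
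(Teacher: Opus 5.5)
\textbf{There is a genuine gap: your lower bound for $\mu_{0,2}$ rests on a sign error, and the bound it produces is false.} With $f=r^{-1/2}g$ you have $r|f'|^2=|g'|^2-gg'/r+|g|^2/(4r^2)$. Integrating the cross term by parts gives $-\int_a^1 (g^2)'/(2r)\dd r=-\int_a^1 |g|^2/(2r^2)\dd r$, because $(1/(2r))'=-1/(2r^2)$. Hence
\[
 \int_a^1 r|f'|^2\dd r=\int_a^1\Bigl(|g'|^2-\frac{|g|^2}{4r^2}\Bigr)\dd r ,
\]
which is exactly the identity the paper uses. The Liouville potential $-1/(4r^2)$ is attractive, so your comparison runs the other way and gives $\mu_{0,2}<400\pi^2/81$. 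Numerically, the second root of $J_0(ka)Y_0(k)-J_0(k)Y_0(ka)$ is $k\approx 6.86$, so $\mu_{0,2}\approx 47$. No argument can prove $\mu_{0,2}>48$, and your final separation $\mu_{3,1}<48<\mu_{0,2}$ fails. Your heuristic $\mu_{0,2}\approx(2\pi/(1-a))^2$ overestimates for the same reason.

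\textbf{What is missing is a Hardy-type inequality that absorbs $\int|g|^2/(4r^2)$ into a fraction of $\int|g'|^2$.} The paper does this as follows.
\begin{enumerate}
\item It splits $(a,1)$ at $c=391/1000$.
\item It writes $g(r)=\int_a^r g'$ on $(a,c)$ and $g(r)=-\int_r^1 g'$ on $(c,1)$.
\item Cauchy--Schwarz and Fubini then give constants $C_L,C_R<39/250$.
\item This yields $\mu_{0,2}\geq\frac{211}{250}(20\pi/9)^2>41$.
\end{enumerate}
This loss reverses your assessment of where the difficulty lies. Since $\mu_{3,1}\geq j_{3,1}^2\approx 40.71$, the trial function must now satisfy $\mathcal{Q}_3[f]<41$, i.e.\ come within about $0.3$ of optimal. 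Your crude choice $r^3(r-a)(1-r)$ gives a quotient of roughly $55$; even $r^3(1-r)$ on the full unit disk gives exactly $45$. The paper instead uses the tuned function $(r-a)(1-r)(1-7r+4r^2)$, whose exact quotient is below $41$ once $\log 10<2303/1000$ is used. So the lower-bound step has no slack, and once it is done correctly, the trial-function step must be sharp.
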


\begin{proof}
Consider the trial function
$f_3(r):=(r-a)(1-r)(1-7r+4r^2)$. The exact integral evaluations recorded in Appendix~\ref{app:trial-integrals} give
\[
 \mathcal{Q}_3[f_3]
 =\frac{7000\bigl(10^6\log 10+12031677\bigr)}{2451519837}.
\]
Using $\log 10<2303/1000$ from
Lemma~\ref{lem:elementary-constants}, we obtain
\begin{equation}\label{eq:mu31-upper}
 \mu_{3,1}\leq\mathcal{Q}_3[f_3]
 <\frac{100342739000}{2451519837}<41.
\end{equation}

We next estimate the second radial eigenvalue from below.  Put
$v(r):=r^{1/2}f(r)$.  Integration by parts, using $v(a)=v(1)=0$, gives
\[
 \int_a^1r|f'|^2\dd r
 =\int_a^1\left(|v'|^2-\frac{|v|^2}{4r^2}\right)\dd r,
 \qquad
 \int_a^1r|f|^2\dd r=\int_a^1|v|^2\dd r.
\]
Choose $c:=391/1000$.  From
$v(r)=\int_a^rv'(s)\dd s$ on $(a,c)$ and
$v(r)=-\int_r^1v'(s)\dd s$ on $(c,1)$, Cauchy--Schwarz and Fubini's theorem
give
\begin{align*}
 \int_a^c\frac{|v|^2}{4r^2}\dd r
 &\leq C_L\int_a^c|v'|^2\dd r,
 &C_L&:=\frac14\left(\log\frac ca+\frac ac-1\right),\\
 \int_c^1\frac{|v|^2}{4r^2}\dd r
 &\leq C_R\int_c^1|v'|^2\dd r,
 &C_R&:=\frac14\left(\frac1c+\log c-1\right).
\end{align*}
The logarithmic estimates in Lemma~\ref{lem:elementary-constants} imply
$C_L<39/250$ and $C_R<39/250$.  Therefore
\begin{equation}\label{eq:hardy-lower}
 \int_a^1\left(|v'|^2-\frac{|v|^2}{4r^2}\right)\dd r
 >\frac{211}{250}\int_a^1|v'|^2\dd r.
\end{equation}
The min--max principle and the Dirichlet spectrum of the interval $(a,1)$
now give
\[
 \mu_{0,2}\geq\frac{211}{250}
 \left(\frac{2\pi}{1-a}\right)^2
 =\frac{211}{250}\left(\frac{20\pi}{9}\right)^2.
\]
Using $\pi>157/50$ from Lemma~\ref{lem:elementary-constants}, we conclude
that
\begin{equation}\label{eq:mu02-lower}
 \mu_{0,2}>\frac{10401878}{253125}>41.
\end{equation}
The bounds \eqref{eq:mu31-upper} and \eqref{eq:mu02-lower} prove the claim.
\end{proof}

\begin{corollary}
The first seven eigenvalues of $A$ are
\begin{equation}\label{eq:low-annular-spectrum}
\begin{aligned}
 \lambda_1(A)&=\mu_{0,1},
 &\lambda_2(A)=\lambda_3(A)&=\mu_{1,1},\\
 \lambda_4(A)=\lambda_5(A)&=\mu_{2,1},
 &\lambda_6(A)=\lambda_7(A)&=\mu_{3,1}.
\end{aligned}
\end{equation}
\end{corollary}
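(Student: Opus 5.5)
The plan is to use the separation-of-variables description of the spectrum of $A$. The Dirichlet spectrum is the multiset containing $\mu_{0,q}$ once and $\mu_{n,q}$ twice for each $n\ge1$. To identify the first seven eigenvalues, it suffices to show two things. First, the four values $\mu_{0,1}<\mu_{1,1}<\mu_{2,1}<\mu_{3,1}$ are strictly ordered. Second, every remaining element of the multiset is strictly larger than $\mu_{3,1}$. Counting multiplicities, $1+2+2+2=7$, which gives \eqref{eq:low-annular-spectrum}. Strictness is not actually needed to read off the seven values; a non-strict ordering with everything else $\ge\mu_{3,1}$ would do. But strictness is available, so I will use it.

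\textbf{Ordering within the first radial level.} By Lemma~\ref{lem:angular-monotonicity} with $q=1$, we have $\mu_{n+1,1}\ge\mu_{n,1}+2n+1>\mu_{n,1}$ for $n=0,1,2$. This gives the chain $\mu_{0,1}<\mu_{1,1}<\mu_{2,1}<\mu_{3,1}$. The same lemma shows $\mu_{n,1}\ge\mu_{4,1}>\mu_{3,1}$ for all $n\ge4$, so every first-level mode with $n\ge4$ lies strictly above $\mu_{3,1}$.

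\textbf{Higher radial levels.} Take any eigenvalue $\mu_{n,q}$ with $q\ge2$. Iterating Lemma~\ref{lem:angular-monotonicity} at fixed $q$ gives $\mu_{n,q}\ge\mu_{0,q}$. The variational eigenvalues of the fixed one-dimensional problem are nondecreasing in $q$, so $\mu_{0,q}\ge\mu_{0,2}$. Proposition~\ref{prop:sector-separation} then gives
\[
\mu_{n,q}\ge\mu_{0,q}\ge\mu_{0,2}>\mu_{3,1}.
\]
Hence no mode with $q\ge2$ can enter among the first seven eigenvalues.

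\textbf{Assembly.} Listing the multiset in increasing order, the smallest entries are $\mu_{0,1}$ (once), then $\mu_{1,1}$, $\mu_{2,1}$ and $\mu_{3,1}$ (twice each). Every other entry is strictly greater than $\mu_{3,1}$, and this yields exactly the stated identifications. The only substantive input is Proposition~\ref{prop:sector-separation}. That result is already established, so there is no genuine obstacle here. The one point requiring care is the comparison $\mu_{n,q}\ge\mu_{0,q}$ for $q\ge2$, which follows directly from the min--max statement in Lemma~\ref{lem:angular-monotonicity}, since that lemma holds for every $q$.
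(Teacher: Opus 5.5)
Your proposal is correct and follows the paper's proof essentially step for step. You use angular monotonicity at $q=1$ for the chain $\mu_{0,1}<\mu_{1,1}<\mu_{2,1}<\mu_{3,1}<\mu_{4,1}\le\cdots$, then $\mu_{n,q}\ge\mu_{0,q}\ge\mu_{0,2}>\mu_{3,1}$ for $q\ge2$ via Proposition~\ref{prop:sector-separation}, and finally count the angular multiplicities $1+2+2+2=7$.
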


\begin{proof}
Lemma~\ref{lem:angular-monotonicity} gives
$\mu_{0,1}<\mu_{1,1}<\mu_{2,1}<\mu_{3,1}<\mu_{4,1}<\cdots$.  For every
$n\geq0$ and $q\geq2$, angular monotonicity gives
$\mu_{n,q}\geq\mu_{0,q}\geq\mu_{0,2}>\mu_{3,1}$, where the last inequality
is Proposition~\ref{prop:sector-separation}.  Taking angular multiplicities
into account yields \eqref{eq:low-annular-spectrum}.
\end{proof}

\subsection{Eigenvalue-ratio estimate for the annulus}

Define
$\alpha:=\mu_{1,1}=\lambda_3(A)$ and
$\beta:=\mu_{3,1}=\lambda_6(A)$.

\begin{proposition}\label{prop:annular-ratio}
The annulus $A$ satisfies
\[
 \frac{\lambda_6(A)}{\lambda_3(A)}
 =\frac{\beta}{\alpha}>\frac{13}{5}.
\]
\end{proposition}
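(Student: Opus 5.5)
The plan is to bound $\beta$ from below and $\alpha$ from above, separately, by explicit rational numbers whose ratio exceeds $13/5$.

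\emph{Lower bound for $\beta$.} By Lemma~\ref{lem:disk-sector} with $n=3$ and Lemma~\ref{lem:bessel-certificates},
\[
 \beta=\mu_{3,1}\geq j_{3,1}^2>\frac{1014}{25}.
\]
This already uses the certified Euler--Rayleigh bound, so no further work is needed on $\beta$.

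\emph{Upper bound for $\alpha$.} It then suffices to prove
\[
 \alpha=\mu_{1,1}\leq\frac{5}{13}\cdot\frac{1014}{25}=\frac{78}{5}.
\]
By the min--max principle, $\alpha\leq\mathcal{Q}_1[f]$ for any $f\in H_0^1(a,1)\setminus\{0\}$. For comparison, the unit disk value is $j_{1,1}^2\approx14.68$. Removing the small hole of radius $1/10$ should raise this only moderately, since the $n=1$ modes vanish at the origin anyway. So there should be room below $15.6$. I would take a polynomial trial function vanishing at both endpoints, say
\[
 f_1(r)=(r-a)(1-r)(s+tr)\quad\text{or}\quad f_1(r)=(r-a)(1-r)(s+tr+ur^2),
\]
and choose rational coefficients that approximate the profile $J_1(j_{1,1}r)$, adjusted near $r=a$. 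The coefficients would be optimized numerically, then fixed as exact rationals.

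\emph{Evaluating the quotient.} The integrals $\int_a^1 r|f_1'|^2\dd r$ and $\int_a^1 r|f_1|^2\dd r$ are polynomial integrals with rational values. The term $\int_a^1 r^{-1}|f_1|^2\dd r$ produces $f_1$-dependent rational numbers plus a multiple of $\log(1/a)=\log10$, coming from the constant term of $|f_1|^2$. Its coefficient is positive, so an upper bound $\log10<2303/1000$ from Lemma~\ref{lem:elementary-constants} yields a rational upper bound for $\mathcal{Q}_1[f_1]$. This is exactly parallel to the computation of $\mathcal{Q}_3[f_3]$ in Proposition~\ref{prop:sector-separation}. I would record the exact integrals in the appendix and check $\mathcal{Q}_1[f_1]<78/5$ by exact rational arithmetic.

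\emph{Main obstacle.} The hard part is choosing a trial function sharp enough. The margin between the true $\mu_{1,1}$ and $15.6$ is probably only a few percent, so a crude two-parameter polynomial may not suffice. If a cubic fails, I would move to a quartic (three free coefficients), or to a trial function of the form $(r-a)(1-r)r^{-1}p(r)$. Any such refinement must keep $f_1$ in $H^1_0(a,1)$ and keep all integrals elementary: rational numbers plus rational multiples of $\log10$.
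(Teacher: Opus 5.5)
Your plan matches the paper's proof: $\beta=\mu_{3,1}\geq j_{3,1}^2>1014/25$ by Lemmas~\ref{lem:disk-sector} and~\ref{lem:bessel-certificates}, and $\alpha\leq\mathcal{Q}_1[f_1]<78/5$ for a cubic trial function, using only $\log10<2303/1000$ (its coefficient $f_1(0)^2$ is indeed nonnegative). The paper's choice $f_1(r)=(r-a)(1-r)(1-r/2)$ lies in your first family ($s=1$, $t=-1/2$) and gives $\mathcal{Q}_1[f_1]<373711016/23993577\approx15.576$, so the margin below $78/5$ is very thin (well under one percent, thinner than your ``few percent'' guess), but no quartic or $r^{-1}$-weighted refinement is needed.
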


\begin{proof}
Lemmas~\ref{lem:disk-sector} and~\ref{lem:bessel-certificates} give
$\beta=\mu_{3,1}\geq j_{3,1}^2>1014/25$.  For the denominator, take
$f_1(r):=(r-a)(1-r)(1-r/2)$.  Appendix~\ref{app:trial-integrals} gives
\[
 \mathcal{Q}_1[f_1]
 =\frac{56\bigl(800000\log 10+4831011\bigr)}{23993577}.
\]
The estimate $\log 10<2303/1000$ yields
\[
 \alpha\leq\mathcal{Q}_1[f_1]
 <\frac{373711016}{23993577}<\frac{78}{5}.
\]
It follows that
\[
 \frac{\beta}{\alpha}
 >\frac{1014/25}{78/5}=\frac{13}{5}.
\qedhere
\]
\end{proof}

Lemma~\ref{lem:bessel-certificates} gives $\gamma_2<13/5$, so
Proposition~\ref{prop:annular-ratio} proves Theorem~\ref{thm:main} for
$m=3$.

\section{Counterexamples for all higher indices}\label{sec:all-m}

We first record the standard approximation that turns a finite disjoint union
into a smooth connected domain without changing any fixed finite part of its
Dirichlet spectrum in the limit.

\begin{lemma}\label{lem:connected-approximation}
Let $G^{(0)}\subset\R^2$ be a finite disjoint union of bounded Lipschitz
domains.  There exists a sequence of bounded connected domains $G_\ell$ with
$C^\infty$ boundary such that, for every fixed $j\geq1$,
\[
 \lambda_j(G_\ell)\longrightarrow\lambda_j(G^{(0)})
 \qquad(\ell\to\infty).
\]
\end{lemma}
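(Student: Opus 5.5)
We connect the components of $G^{(0)}$ by thin channels and then round off the corners. Write $G^{(0)}=D_1\cup\cdots\cup D_K$, with the closures $\overline{D_i}$ pairwise disjoint. Choose a tree of straight segments $\sigma_1,\dots,\sigma_{K-1}$ joining the components. Each segment runs from a boundary point of one component to a boundary point of another and otherwise avoids $\overline{G^{(0)}}$. For $\varepsilon>0$ let $T_\varepsilon$ be the open $\varepsilon$-neighbourhood of these segments. Set $H_\varepsilon:=G^{(0)}\cup T_\varepsilon$; this is a bounded connected open set. The aim is to produce smooth domains $G_\ell$ with
\[
 G^{(0)}\subset G_\ell\subset H_{\varepsilon_\ell},\qquad \varepsilon_\ell\to0 .
\]
Such domains come from smoothing the concave corners where channels meet $\partial D_i$. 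Concretely, take $G_\ell$ to be a sublevel set of a mollified distance-type function: mollify $\mathbf 1_{H_{\varepsilon_\ell}}$, or regularize $\operatorname{dist}(\cdot,\R^2\setminus H_{\varepsilon_\ell})$, and pick a regular value by Sard's theorem. The sublevel set must contain $\overline{G^{(0)}}$, because $G^{(0)}$ itself need not be smooth. A cleaner alternative is to first approximate each Lipschitz $D_i$ from outside by a smooth $D_i'\supset D_i$, then attach smooth channels with smoothed junctions.

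The inclusion $G^{(0)}\subset G_\ell$ gives the upper bound $\lambda_j(G_\ell)\le\lambda_j(G^{(0)})$ at once, by domain monotonicity of Dirichlet eigenvalues. The work is in the lower bound. For this I would use Mosco convergence of $H_0^1(G_\ell)$ to $H_0^1(G^{(0)})$, or equivalently norm-resolvent convergence, which follows from two facts.

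\begin{enumerate}[label=(\roman*)]
\item The measures satisfy $|G_\ell\setminus G^{(0)}|\to0$ and the outer sets shrink. Then any weak $H^1(\R^2)$-limit $u$ of functions $u_\ell\in H_0^1(G_\ell)$ with bounded energy vanishes quasi-everywhere outside $\bigcap_\ell\overline{H_{\varepsilon_\ell}}=\overline{G^{(0)}}\cup\bigcup\sigma_k$.
\item A set of the form $\overline{G^{(0)}}\cup\bigcup\sigma_k$ differs from $\overline{G^{(0)}}$ by segments. These have positive capacity in $\R^2$, so $u$ vanishing q.e. outside $\overline{G^{(0)}}\cup\bigcup\sigma_k$ does not immediately mean $u\in H_0^1(G^{(0)})$.
\end{enumerate}

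Hence the limit must be controlled more carefully. Since the channels have width $\varepsilon\to0$, the limit $u$ restricted to a neighbourhood of $\sigma_k$ lies in $H^1$ of a set of measure zero, so $u=0$ a.e. there. Then $u\in H^1(\R^2)$ with $u=0$ a.e. on $\R^2\setminus G^{(0)}$. For a Lipschitz open set, such a function belongs to $H_0^1(G^{(0)})$; this is the stability of Lipschitz domains, i.e.\ $H_0^1(G)=\{u\in H^1(\R^2):u=0 \text{ a.e. off } G\}$. This is exactly where the Lipschitz hypothesis enters.

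Given Mosco convergence together with the uniform boundedness of the $G_\ell$, the Rellich embeddings give compact convergence of the resolvents $(-\Delta_{G_\ell})^{-1}\to(-\Delta_{G^{(0)}})^{-1}$, with functions extended by zero to $L^2$ of a large ball. Compact convergence of compact self-adjoint operators yields convergence of each eigenvalue, counted with multiplicity. Alternatively, one can argue directly with min--max. Take normalized eigenfunctions $u_{1,\ell},\dots,u_{j,\ell}$ of $G_\ell$ with bounded energies. Extract strong $L^2$ limits, which stay orthonormal; by the argument above they lie in $H_0^1(G^{(0)})$. Lower semicontinuity of the Dirichlet energy then gives $\liminf_\ell\lambda_j(G_\ell)\ge\lambda_j(G^{(0)})$.

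The main obstacle is the geometric construction. We need smooth connected $G_\ell$ that contain $G^{(0)}$ and lie within a shrinking neighbourhood $N_\ell$ of $\overline{G^{(0)}}\cup\bigcup\sigma_k$ with $|N_\ell\setminus G^{(0)}|\to0$. In that generality the approximating sets may swell slightly around $\partial G^{(0)}$, so the clean picture above needs adjusting: a limit $u$ then vanishes only a.e.\ off $\overline{G^{(0)}}$ rather than off $G^{(0)}$. Since $\partial G^{(0)}$ is Lipschitz it has measure zero, so the same stability argument still applies, and this is the step I would check carefully.
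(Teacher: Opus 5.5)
Your argument is correct, but it takes a genuinely different route from the paper.

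\textbf{The paper's route.} It first approximates each component from inside by smooth domains $D_i^{(r)}\subset\subset D_i$. It then uses rigid motions so that consecutive closures touch at a finite set $E$, and opens smooth necks inside $N_{C\varepsilon}(E)$. The lower bound comes from Daners' Theorem~7.5, whose hypotheses reduce to $\operatorname{cap}(E)=0$. A diagonal argument in $(r,\varepsilon)$ finishes.

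\textbf{Your route.} You keep the components in place and join them by channels of fixed length whose width shrinks, approximating from outside. The inclusion $G^{(0)}\subset G_\ell$ then gives $\lambda_j(G_\ell)\le\lambda_j(G^{(0)})$ for free. Segments have positive capacity, so the channels cannot be discarded by capacity. You correctly replace capacity by a measure argument plus the stability identity $H_0^1(G^{(0)})=\{u\in H^1(\R^2): u=0 \text{ a.e.\ off } G^{(0)}\}$.

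\textbf{What each buys.} Your approach is more elementary: Rellich, lower semicontinuity and min--max, a single limit, and no capacity theory or rigid motions. It genuinely uses the Lipschitz hypothesis. The paper's approach does not depend essentially on boundary regularity of the $D_i$, because it approximates from inside and perturbs only near a finite set.

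\textbf{Points to make explicit.}
\begin{enumerate}[label=(\alph*)]
\item The stability identity needs the closures $\overline{D_i}$ to be pairwise disjoint; two squares sharing an edge violate it. You may arrange this by translating components, since the conclusion involves only the spectrum.
\item As you note, a smooth $G_\ell\supset G^{(0)}$ cannot satisfy $G_\ell\subset H_{\varepsilon_\ell}$ near corners of $G^{(0)}$, such as those of $P$. The working hypothesis is therefore $\overline{G^{(0)}}\subset G_\ell$ with $|G_\ell\setminus G^{(0)}|\to0$.
\item Given strong $L^2$ convergence of the zero-extended eigenfunctions, that measure condition alone forces the limit to vanish a.e.\ off $G^{(0)}$. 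The quasi-everywhere discussion in your items (i)--(ii) can therefore be dropped.
\end{enumerate}
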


\begin{proof}
Write \(G^{(0)}=D_1\mathbin{\dot\cup}\cdots\mathbin{\dot\cup}D_s\). Choose connected smooth domains \(D_i^{(r)}\subset \subset D_i\) increasing to \(D_i\), and set
\[
G^{(r)}:=\mathbin{\dot\bigcup}_{i=1}^s D_i^{(r)}.
\]
Standard domain-approximation results give
\[
\lambda_j(G^{(r)})\longrightarrow\lambda_j(G^{(0)})
\qquad (r\to\infty)
\]
for every fixed \(j\); see
\cite[Proposition~7.1, Corollary~4.7, and Remark~4.3]{Daners2003}.

Fix \(r\). After rigid motions of the components, which do not change
their spectra, arrange them in a chain with pairwise disjoint interiors,
so that consecutive closures meet at finitely many points and all other
closures are disjoint. Let \(E\) be the resulting finite set of
contact points.  By smoothing the contacts inside small neighborhoods of
\(E\), we obtain bounded connected \(C^\infty\) domains
\(G^{(r)}_\varepsilon\), all contained in a fixed bounded set, such that
\begin{equation}\label{eq:handle-inclusions}
 G^{(r)}\subset G^{(r)}_\varepsilon
 \subset G^{(r)}\cup N_{C\varepsilon}(E),
 \qquad
 N_\rho(E):=\{x:\operatorname{dist}(x,E)<\rho\},
\end{equation}
where \(C\) is independent of \(\varepsilon\).

We verify the three hypotheses of \cite[Theorem~7.5]{Daners2003} along
an arbitrary sequence \(\varepsilon_n\downarrow0\).  First, if \(K\subset\subset G^{(r)}\), then
\(K\setminus G^{(r)}_{\varepsilon_n}=\varnothing\) by the first
inclusion in \eqref{eq:handle-inclusions}, and therefore
\[
 \operatorname{cap}
 \bigl(K\setminus G^{(r)}_{\varepsilon_n}\bigr)=0.
\]
Here \(\operatorname{cap}\) denotes the standard \((1,2)\)-capacity in
\(\mathbb R^2\),
\[
 \operatorname{cap}(F)
 :=
 \inf\left\{
   \|v\|_{H^1(\mathbb R^2)}^2:
   v\in H^1(\mathbb R^2),\
   v\geq1 \text{ a.e.\ in a neighborhood of }F
 \right\},
\]
as in \cite[p.~605]{Daners2003}. For the second
hypothesis, the single open set $U:=\mathbb R^2\setminus\overline{G^{(r)}}$ covers \(\mathbb R^2\setminus\overline{G^{(r)}}\).
Since \(E\) is finite, the second inclusion in
\eqref{eq:handle-inclusions} gives
\[
 \bigl|U\cap G^{(r)}_{\varepsilon_n}\bigr|
 \leq \bigl|N_{C\varepsilon_n}(E)\bigr|
 =O(\varepsilon_n^2)\longrightarrow0.
\]
Therefore, by \cite[Proposition~7.6(1)]{Daners2003},
\[
 \lambda_1\bigl(U\cap G^{(r)}_{\varepsilon_n}\bigr)
 \longrightarrow\infty.
\]
Finally, the boundary accumulation set appearing in
\cite[(7.2)]{Daners2003}, namely
\[
 \Gamma:=\bigcap_{n=1}^{\infty}
 \overline{\bigcup_{k\geq n}
 \bigl(G^{(r)}_{\varepsilon_k}\cap\partial G^{(r)}\bigr)},
\]
is contained in \(E\) by \eqref{eq:handle-inclusions}. Since \(\Gamma\subset E\) and \(E\) is finite, we have
\(\operatorname{cap}(\Gamma)=0\).  Consequently,
\[
 H_0^1\bigl(G^{(r)}\cup\Gamma\bigr)
 =
 H_0^1\bigl(G^{(r)}\bigr),
\]
which is the third hypothesis of
\cite[Theorem~7.5]{Daners2003}. Therefore, the convergence conclusion of \cite[Theorem~7.5]{Daners2003} applies to
\(G^{(r)}_{\varepsilon}\) as \(\varepsilon\downarrow 0\). Since all these domains lie in a fixed bounded
set, \cite[Corollary~4.7 and Remark~4.3]{Daners2003} yields
\[
 \lambda_j(G^{(r)}_\varepsilon)
 \longrightarrow\lambda_j(G^{(r)})
 \qquad (\varepsilon\downarrow0)
\]
for every fixed \(j\), counting multiplicities.
A diagonal choice of \(r\to\infty\) and \(\varepsilon\downarrow0\)
completes the proof.
\end{proof}

We now choose an auxiliary rectangle with exactly one eigenvalue below
$\alpha$ and exactly two below $\beta$.  Let
\[
 P:=\left(0,\frac{\pi}{2}\right)
 \times\left(0,\frac{\pi}{\sqrt{10}}\right).
\]
Its separated Dirichlet eigenvalues are $4p^2+10q^2$, where $p,q\in\mathbb{N}$, and hence
\[
 \lambda_1(P)=14,
 \qquad
 \lambda_2(P)=26,
 \qquad
 \lambda_3(P)=44.
\]
The estimates above give the transparent chain
\begin{equation}\label{eq:interlacing}
 14<\alpha<\frac{78}{5}<26
 <\frac{1014}{25}<\beta<41<44.
\end{equation}
In particular,
$\lambda_1(P)<\alpha<\lambda_2(P)<\beta<\lambda_3(P)$.

\begin{proposition}\label{prop:all-indices}
For every integer $m\geq4$, there exists a planar domain $\Omega_m$ with
$C^\infty$ boundary such that
\[
 \frac{\lambda_{2m}(\Omega_m)}{\lambda_m(\Omega_m)}>\frac{13}{5}.
\]
\end{proposition}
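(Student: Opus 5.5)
Let $G^{(0)}:=A\mathbin{\dot\cup}P_1\mathbin{\dot\cup}\cdots\mathbin{\dot\cup}P_{m-3}$, where the $P_i$ are congruent copies of the rectangle $P$. We first compute $\lambda_m(G^{(0)})$ and $\lambda_{2m}(G^{(0)})$ exactly by spectral counting. We then pass to a smooth connected domain with Lemma~\ref{lem:connected-approximation}.

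The Dirichlet spectrum of the disjoint union is the union of the spectra, counted with multiplicity. By \eqref{eq:low-annular-spectrum}, the annulus contributes exactly $\lambda_1(A)<\alpha$ strictly below $\alpha$. Each rectangle contributes exactly $\lambda_1(P)=14<\alpha$, since \eqref{eq:interlacing} gives $\lambda_2(P)=26>\alpha$. Thus $G^{(0)}$ has exactly $1+(m-3)=m-2$ eigenvalues strictly below $\alpha$, and $\alpha$ itself has multiplicity two in $A$ and does not occur in $P$. Hence
\[
 \lambda_{m-1}(G^{(0)})=\lambda_m(G^{(0)})=\alpha.
\]

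Similarly, we count eigenvalues strictly below $\beta$. The annulus contributes $\mu_{0,1},\mu_{1,1},\mu_{1,1},\mu_{2,1},\mu_{2,1}$, which is five, since $\lambda_6(A)=\beta$. Each rectangle contributes $14$ and $26$, since $\lambda_3(P)=44>\beta$. This gives $5+2(m-3)=2m-1$ eigenvalues below $\beta$, and $\beta$ occurs with multiplicity two from $A$ and not in $P$. Hence
\[
 \lambda_{2m}(G^{(0)})=\lambda_{2m+1}(G^{(0)})=\beta,
\]
and therefore $\lambda_{2m}(G^{(0)})/\lambda_m(G^{(0)})=\beta/\alpha>13/5$ by Proposition~\ref{prop:annular-ratio}. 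The whole counting step rests on the strict chain \eqref{eq:interlacing}. Two facts need checking: that $\lambda_2(P)=26$ and $\lambda_3(P)=44$ are the correct enumeration of $4p^2+10q^2$ (namely $14$, $26$ at $(2,1)$, $44$ at $(3,1)$, with $(1,2)$ giving $44$ as well), and that no annular eigenvalue besides those listed lies below $\beta$. Both are settled by the corollary and the listed values.

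The annulus and the rectangles are bounded Lipschitz domains, so Lemma~\ref{lem:connected-approximation} applies to $G^{(0)}$. It gives smooth connected domains $G_\ell$ with $\lambda_m(G_\ell)\to\alpha$ and $\lambda_{2m}(G_\ell)\to\beta$. The ratio $\beta/\alpha$ exceeds $13/5$ strictly, so the ratio for $G_\ell$ exceeds $13/5$ for all large $\ell$, and we take $\Omega_m:=G_\ell$.

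The main point requiring care is the counting, which relies on strict inequalities in \eqref{eq:interlacing} so that no ties occur at $\alpha$ or $\beta$. There is no genuine obstacle beyond this, since the analytic work was done in Section~\ref{sec:annulus} and in Lemma~\ref{lem:connected-approximation}.
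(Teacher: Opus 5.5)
Your proof is correct and follows the paper's own argument essentially verbatim. You count the eigenvalues of $A\mathbin{\dot\cup}P_1\mathbin{\dot\cup}\cdots\mathbin{\dot\cup}P_{m-3}$ strictly below $\alpha$ and $\beta$ via \eqref{eq:low-annular-spectrum} and \eqref{eq:interlacing}, then pass to smooth connected domains with Lemma~\ref{lem:connected-approximation}. One harmless slip: $(p,q)=(3,1)$ gives $4\cdot 9+10=46$, not $44$, so $\lambda_3(P)=44$ comes only from $(1,2)$ and is simple; this changes nothing, since you only use $26<\beta<44$.
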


\begin{proof}
Let $P_1,\ldots,P_{m-3}$ be disjoint copies of $P$ and set
\[
 G_m^{(0)}:=A\mathbin{\dot\cup}P_1\mathbin{\dot\cup}\cdots
 \mathbin{\dot\cup}P_{m-3}.
\]
The spectrum of a finite disjoint union is the multiset union of the spectra
of its components.  By \eqref{eq:low-annular-spectrum} and
\eqref{eq:interlacing}, there are exactly
$1+(m-3)=m-2$ eigenvalues of $G_m^{(0)}$ strictly below $\alpha$, while
$\alpha$ has multiplicity two.  Hence $\lambda_m(G_m^{(0)})=\alpha$.
Likewise, there are exactly $5+2(m-3)=2m-1$ eigenvalues strictly below
$\beta$, while $\beta$ has multiplicity two.  Thus
\begin{equation}\label{eq:disconnected-ratio}
 \lambda_m(G_m^{(0)})=\alpha,
 \qquad
 \lambda_{2m}(G_m^{(0)})=\beta,
 \qquad
 \frac{\lambda_{2m}(G_m^{(0)})}{\lambda_m(G_m^{(0)})}>\frac{13}{5}.
\end{equation}
Apply Lemma~\ref{lem:connected-approximation} to $G_m^{(0)}$.  The $m$th and
$2m$th eigenvalues of the resulting smooth connected domains converge to
$\alpha$ and $\beta$, respectively. Since the last inequality in \eqref{eq:disconnected-ratio} is strict,
the same strict inequality holds for all sufficiently large members of the
approximating sequence. Choose one such domain and denote it by $\Omega_m$.
\end{proof}

Propositions~\ref{prop:annular-ratio} and~\ref{prop:all-indices} complete the
proof of Theorem~\ref{thm:main}.

\section{Relation to nodal-domain estimates}\label{sec:nodal}

The mechanism behind the known $m=2$ case also explains why the conjecture
was plausible.  Suppose that an eigenfunction $u$ for $\lambda_m(\Omega)$
has $\ell(u)$ nodal domains.  On every nodal domain $D_i$, the restriction of
$u$ is a first Dirichlet eigenfunction, so
$\lambda_1(D_i)=\lambda_m(\Omega)$.  Applying the definition of $C_{N,k}$ to
each $D_i$, taking the disjoint union, and using Dirichlet domain monotonicity
gives
\begin{equation}\label{eq:nodal-bound}
 \lambda_{k\ell(u)}(\Omega)
 \leq C_{N,k}\lambda_m(\Omega).
\end{equation}
This argument, in a sharper Bessel-zero form, is already present in
Ashbaugh and Benguria's work on higher eigenvalue ratios
\cite{AshbaughBenguria1993}.

For $m=2$, every eigenfunction associated with $\lambda_2(\Omega)$ on a
connected domain has exactly two nodal domains, and
\eqref{eq:nodal-bound} with $k=2$ gives the required index $4$. The annulus makes the obstruction at $m=3$ transparent.  Its
$\lambda_3$-eigenspace is
$f(r)\operatorname{span}\{\cos\theta,\sin\theta\}$, where $f$ is the
positive first radial eigenfunction in the $n=1$ sector.  Every nonzero
eigenfunction in this space has exactly two nodal domains, not three.  Thus
the nodal argument controls $\lambda_4(A)$ at the scale
$\gamma_2\lambda_3(A)$, whereas Conjecture~\ref{conj:ashbaugh} asks for
control of $\lambda_6(A)$.  Proposition~\ref{prop:annular-ratio} shows that
this missing index shift is not merely a limitation of the method: the
claimed inequality itself fails.

\appendix
\section{Exact-arithmetic verification}

For reproducibility, the exact computations in this appendix, as well as the
Rayleigh sums in Table~\ref{tab:rayleigh-sums}, can be independently verified using the accompanying Python/SymPy script~\cite{ExactArithmeticCode}.
The repository also contains the output of a successful run.
The script verifies the trial-function integrals and the rational certificates
used below using symbolic and exact rational arithmetic only; no floating-point
computation is involved.

\subsection{Exact trial-function integrals}\label{app:trial-integrals}

For \(f\in H_0^1(a,1)\), set
\begin{align*}
 I_0(f)&:=\int_a^1r f(r)^2\dd r,
 &I_1(f)&:=\int_a^1r f'(r)^2\dd r,\\
 I_{-1}(f)&:=\int_a^1\frac{f(r)^2}{r}\dd r.
\end{align*}
For the two trial functions used above, direct expansion and integration give
\begin{align*}
 I_0(f_1)&=\frac{23993577}{4480000000},
 &I_1(f_1)&=\frac{2401569}{40000000},\\
 I_{-1}(f_1)&=\frac{\log 10}{100}+\frac{27873}{80000000},\\[2mm]
 I_0(f_3)&=\frac{22063678533}{700000000000},
 &I_1(f_3)&=\frac{187046577}{350000000},\\
 I_{-1}(f_3)&=\frac{\log 10}{100}+\frac{42655833}{700000000}.
\end{align*}
Substitution into
$\mathcal{Q}_n[f]=(I_1(f)+n^2I_{-1}(f))/I_0(f)$ gives the quotient formulas
used in Propositions~\ref{prop:sector-separation} and~\ref{prop:annular-ratio}.

\subsection{Elementary bounds for constants}

\begin{lemma}\label{lem:elementary-constants}
The following inequalities hold:
\[
 \log 10<\frac{2303}{1000},
 \qquad
 \pi>\frac{157}{50},
 \qquad
 \log\frac{391}{100}<\frac{273}{200},
 \qquad
 \log\frac{391}{1000}< -\frac{187}{200}.
\]
\end{lemma}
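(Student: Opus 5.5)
We prove the four inequalities one at a time, using only series with explicit rational tail bounds, so every comparison reduces to exact rational arithmetic.

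\emph{The bound on $\log 10$.} Write $\log 10=3\log 2+\log\frac54$ and use the rapidly convergent series
\[
 \log\frac{1+x}{1-x}=2\sum_{k\geq0}\frac{x^{2k+1}}{2k+1}.
\]
With $x=1/3$ this gives $\log 2$, and with $x=1/9$ it gives $\log\frac54$. After truncating each series at a few terms, we bound the tail by a geometric series: the tail from index $K$ on is at most
\[
 \frac{2x^{2K+1}}{(2K+1)(1-x^2)}.
\]
This produces an explicit rational upper bound for $\log 10$, which we compare with $2303/1000$. Since $\log 10\approx 2.302585$, the margin is about $4\times10^{-4}$. Five or six terms for $\log 2$ and three for $\log\frac54$ should suffice.

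\emph{The bound on $\pi$.} Machin's formula $\pi=16\arctan\frac15-4\arctan\frac1{239}$ gives a lower bound once we use the alternating-series estimates for $\arctan$: take a lower bound for $\arctan\frac15$ by truncating after a negative term, and an upper bound for $\arctan\frac1{239}$ by truncating after a positive term. Since $\pi-3.14\approx0.0016$, two or three terms are plenty.

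\emph{The bound on $\log\frac{391}{100}$.} Write $\log\frac{391}{100}=\log 4+\log\frac{391}{400}$. The second term is negative. Since $\log(1-y)\leq -y$ with $y=9/400$, we get
\[
 \log\frac{391}{100}\leq 2\log 2-\frac{9}{400}.
\]
We then need an upper bound for $\log 2$, taken from the same series as above. Numerically $2\log 2-0.0225\approx1.3638$, while $273/200=1.365$, so the margin is about $10^{-3}$. An upper bound $\log 2<0.6932$ suffices.

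\emph{The bound on $\log\frac{391}{1000}$.} We use $\log\frac{391}{1000}=\log\frac{391}{100}-\log 10$, which needs a \emph{lower} bound on $\log 10$ together with the upper bound just obtained. Numerically $\log 0.391\approx-0.93905$ and $-187/200=-0.935$, so the margin is about $4\times10^{-3}$. This case is comfortable: $1.3638-2.3025<-0.935$.

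The main obstacle is the tightness of the $\log 10$ bound. It needs enough series terms and a rigorous tail estimate. The other three have larger relative margins and follow by the same mechanism. The resulting rational comparisons are checked by direct cross-multiplication, or by the accompanying exact-arithmetic script.
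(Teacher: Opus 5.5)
Your proposal is correct. For $\pi$ you do what the paper does: Machin's formula with alternating-series truncation. For the three logarithms, however, you take a different route.

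The paper treats each bound as a self-contained certificate:
\begin{itemize}
\item $\log 10<2303/1000$ comes from the exponential series, via $\sum_{q=0}^{9}(2303/1000)^q/q!>10$.
\item $\log\frac{391}{100}$ comes from $\log x=2\sum_{q\ge0}t^{2q+1}/(2q+1)$ evaluated directly at $t=291/491$. This series converges slowly, needs a geometric tail bound after $t^5$, and leaves a final margin of only about $3\cdot10^{-4}$.
\item $\log\frac{391}{1000}$ comes from the same series at $t=-609/1391$. All terms are negative, so plain truncation gives the upper bound.
\end{itemize}

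You instead reduce everything to the fast series for $\log 2$ and $\log\frac54$ (at $x=1/3$ and $x=1/9$), together with $391=400-9$ and $\log(1-y)\le -y$. The numbers check out:
\begin{itemize}
\item four terms for $\log 2$ with tail $1/78732$, and two terms for $\log\frac54$ with tail $1/145800$, already give $\log 10<2.3026$;
\item the same $\log 2$ bound gives $2\log 2-\frac{9}{400}<1.3639$.
\end{itemize}

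Your version is more economical and gives a larger margin for $\log\frac{391}{100}$. The cost is that the inequalities depend on shared bounds rather than each being certified independently.

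The one step you leave implicit is the lower bound for $\log 10$ in the last inequality. It follows immediately from partial sums of the positive series. You can also avoid it entirely by writing $\log\frac{391}{1000}=\log\frac{391}{400}-\log 2-\log\frac54\le-\frac{9}{400}-\log 2-\log\frac54$. This needs only partial-sum lower bounds for $\log 2$ and $\log\frac54$, with no tail estimates; even $\log 2>\frac23+\frac2{81}$ and $\log\frac54>\frac29$ suffice.
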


\begin{proof}
The first inequality follows from the Taylor series of the exponential:
\[
 \exp\left(\frac{2303}{1000}\right)
 >\sum_{q=0}^{9}\frac{(2303/1000)^q}{q!}>10.
\]
For the second, Machin's formula and the alternating series for the arctangent
give
\[
 \frac\pi4
 =4\arctan\frac15-\arctan\frac1{239}
 >4\left(\frac15-\frac{1}{3\cdot5^3}\right)-\frac1{239}
 =\frac{70369}{89625}>\frac{157}{200}.
\]

For $x>0$, put $t:=(x-1)/(x+1)$.  Then
$\log x=2\sum_{q=0}^\infty t^{2q+1}/(2q+1)$.  For $x=391/100$, one has
$t=291/491$.  Bounding the positive tail after the $t^5$ term by a geometric
series yields
\[
 \log\frac{391}{100}
 \leq2\left(t+\frac{t^3}{3}+\frac{t^5}{5}
 +\frac{t^7}{7(1-t^2)}\right)<\frac{273}{200}.
\]
For $x=391/1000$, one has $t=-609/1391$.  All terms in the series are
negative, and hence
\[
 \log\frac{391}{1000}
 <2\left(t+\frac{t^3}{3}+\frac{t^5}{5}\right)< -\frac{187}{200}.
\]
The final comparisons are exact rational inequalities.
\end{proof}

We finish by checking the constants used in \eqref{eq:hardy-lower}.  Since
$100/391<32/125$ and $1000/391<1279/500$, the preceding lemma gives
\begin{align*}
 C_L
 &<\frac14\left(\frac{273}{200}+\frac{32}{125}-1\right)
 =\frac{621}{4000}<\frac{39}{250},\\
 C_R
 &<\frac14\left(\frac{1279}{500}-\frac{187}{200}-1\right)
 =\frac{623}{4000}<\frac{39}{250}.
\end{align*}

\section*{Statement on AI usage}

We acknowledge the use of AI during the development of this work.
The authors first identified the annulus
\[
\{x\in\mathbb{R}^2:1/10<|x|<1\}
\]
as a candidate counterexample for \(m=3\). ChatGPT assisted in finding and
carrying out the explicit estimates needed to turn this candidate into a
rigorous counterexample, including suitable trial functions and exact
numerical certificates. The extension of the construction from \(m=3\) to
every \(m\geq4\), using auxiliary rectangles with suitably interlacing
Dirichlet eigenvalues and the resulting spectral-counting argument in
Section~\ref{sec:all-m}, was obtained with ChatGPT.

The accompanying Python/SymPy code for verifying the exact-arithmetic
computations~\cite{ExactArithmeticCode} was also generated by ChatGPT.
All AI-assisted mathematical arguments, computations, and code were
subsequently carefully checked by the authors. The authors take full
responsibility for all mathematical claims and computational results in
this paper.


\begin{thebibliography}{99}

\bibitem{AnoopBobkovDrabek2022}
T.~V. Anoop, V.~Bobkov, and P.~Dr\'abek,
\emph{Szeg\H{o}--Weinberger type inequalities for symmetric domains with holes},
SIAM J. Math. Anal. \textbf{54} (2022), no.~1, 389--422.
\href{https://doi.org/10.1137/21M1407227}
{doi:10.1137/21M1407227}.

\bibitem{Ashbaugh1999}
M.~S. Ashbaugh,
\emph{Open problems on eigenvalues of the Laplacian},
in T.~M. Rassias and H.~M. Srivastava (eds.),
\emph{Analytic and Geometric Inequalities and Applications},
Mathematics and Its Applications, vol.~478,
Kluwer Academic Publishers, Dordrecht, 1999, pp.~13--28.
\href{https://doi.org/10.1007/978-94-011-4577-0_2}{doi:10.1007/978-94-011-4577-0\_2}.

\bibitem{AshbaughBenguria1992}
M.~S. Ashbaugh and R.~D. Benguria,
\emph{A sharp bound for the ratio of the first two eigenvalues of Dirichlet
Laplacians and extensions},
Ann. of Math. (2) \textbf{135} (1992), no.~3, 601--628.
\href{https://doi.org/10.2307/2946578}{doi:10.2307/2946578}.

\bibitem{AshbaughBenguria1993}
M.~S. Ashbaugh and R.~D. Benguria,
\emph{Isoperimetric bounds for higher eigenvalue ratios for the
$n$-dimensional fixed membrane problem},
Proc. Roy. Soc. Edinburgh Sect. A \textbf{123} (1993), no.~6, 977--985.
\href{https://doi.org/10.1017/S0308210500029656}{doi:10.1017/S0308210500029656}.

\bibitem{AshbaughBenguria1994}
M.~S. Ashbaugh and R.~D. Benguria,
\emph{Bounds for ratios of eigenvalues of the Dirichlet Laplacian},
Proc. Amer. Math. Soc. \textbf{121} (1994), no.~1, 145--150.
\href{https://doi.org/10.1090/S0002-9939-1994-1186125-1}{doi:10.1090/S0002-9939-1994-1186125-1}.

\bibitem{AshbaughBenguriaLaugesenWeidl2009}
M.~S. Ashbaugh, R.~D. Benguria, R.~S. Laugesen, and T.~Weidl,
\emph{Low eigenvalues of Laplace and Schr\"odinger operators},
Oberwolfach Rep. \textbf{6} (2009), no.~1, 355--428.
\href{https://doi.org/10.4171/OWR/2009/06}{doi:10.4171/OWR/2009/06}.

\bibitem{BucurButtazzoHenrot2007}
D.~Bucur, G.~Buttazzo, and A.~Henrot,
\emph{Mini-workshop: Shape analysis for eigenvalues},
Oberwolfach Rep. \textbf{4} (2007), no.~2, 995--1026.
\href{https://doi.org/10.4171/OWR/2007/18}
{doi:10.4171/OWR/2007/18}.

\bibitem{Daners2003}
D.~Daners,
\emph{Dirichlet problems on varying domains},
J. Differential Equations \textbf{188} (2003), no.~2, 591--624.
\href{https://doi.org/10.1016/S0022-0396(02)00105-5}{doi:10.1016/S0022-0396(02)00105-5}.

\bibitem{Funano2026}
K.~Funano,
\emph{Some universal inequalities for Dirichlet eigenvalues of the Laplacian
on a Euclidean convex domain},
arXiv:2604.11114 [math.SP], 2026.
\href{https://doi.org/10.48550/arXiv.2604.11114}
{doi:10.48550/arXiv.2604.11114}.

\bibitem{GuptaMuldoon2000}
D.~P. Gupta and M.~E. Muldoon,
\emph{Riccati equations and convolution formulae for functions of Rayleigh type},
J. Phys. A: Math. Gen. \textbf{33} (2000), no.~7, 1363--1368.
\href{https://doi.org/10.1088/0305-4470/33/7/306}
{doi:10.1088/0305-4470/33/7/306}.

\bibitem{ExactArithmeticCode}
Y.~He, Q.~Tang, and H.~Zhang,
\emph{dirichlet-eigenvalue-ratio-verification},
Python/SymPy source code,
\url{https://github.com/QuanyuTang/dirichlet-eigenvalue-ratio-verification}.

\bibitem{IsmailMuldoon1995}
M.~E.~H. Ismail and M.~E. Muldoon,
\emph{Bounds for the small real and purely imaginary zeros of Bessel and
related functions},
Methods Appl. Anal. \textbf{2} (1995), no.~1, 1--21.
\href{https://doi.org/10.4310/MAA.1995.v2.n1.a1}
{doi:10.4310/MAA.1995.v2.n1.a1}.

\bibitem{Kishore1963}
N.~Kishore,
\emph{The Rayleigh function},
Proc. Amer. Math. Soc. \textbf{14} (1963), 527--533.
\href{https://doi.org/10.1090/S0002-9939-1963-0151649-2}
{doi:10.1090/S0002-9939-1963-0151649-2}.


\bibitem{DLMF}
F.~W.~J. Olver and L.~C. Maximon,
\emph{Bessel Functions}, Chapter~10 in
NIST Digital Library of Mathematical Functions.
\url{https://dlmf.nist.gov/10}.

\bibitem{PPW1956}
L.~E. Payne, G.~P\'olya, and H.~F. Weinberger,
\emph{On the ratio of consecutive eigenvalues},
J. Math. and Phys. \textbf{35} (1956), no.~1--4, 289--298.
\href{https://doi.org/10.1002/sapm1956351289}
{doi:10.1002/sapm1956351289}.



\end{thebibliography}
\end{document}